\pgfplotsset{compat=1.18} 
\newtheorem{thm}{Theorem}[section]
\newtheorem{lem}[thm]{Lemma}
\theoremstyle{definition}
\numberwithin{equation}{section}
\newcommand{\seqnum}[1]{\href{https://oeis.org/#1}{\rm \underline{#1}}}
\begin{document}

\title[Smallest missing denominator]{The smallest denominator not contained in a unit fraction decomposition of $1$ with fixed length}

\author[Wouter van Doorn]{Wouter van Doorn}
\address{Wouter van Doorn, Groningen, the Netherlands} 
\email{wonterman1@hotmail.com}

\author[Quanyu Tang]{Quanyu Tang}
\address{School of Mathematics and Statistics, Xi'an Jiaotong University, Xi'an 710049, P. R. China}
\email{tang\_quanyu@163.com}

\subjclass[2020]{Primary 11D68; Secondary 11B75} 
\keywords{Egyptian fractions, growth rates}

\begin{abstract}
Let $v(k)$ be the smallest integer larger than $1$ that does not occur among the denominators in any identity of the form
$$
1=\frac1{n_1}+\cdots+\frac1{n_k},
$$
where $1 \le n_1<\cdots<n_k$ are pairwise distinct integers. In their 1980 monograph, Erd\H{o}s and Graham asked for quantitative estimates on the growth of $v(k)$ and suggested the lower bound $v(k)\gg k!$. In this paper we give the first known improvement and show that there exists an absolute constant $c>0$ such that the inequality
$$
v(k)\ge e^{c k^2}
$$
holds for all positive integers $k$.
\end{abstract}

\maketitle

\section{Introduction}\label{sec:intro}
For a positive integer $k$, let $S_k$ denote the set of all $k$-tuples
$(n_1,\dots,n_k)$ of integers satisfying
\begin{equation}\label{eq:ufdowt_1}
1=\frac1{n_1}+\frac1{n_2}+\cdots+\frac1{n_k},
\end{equation}
with $1\le n_1<n_2<\cdots<n_k$. We call an equation of the form \eqref{eq:ufdowt_1} a \emph{\(k\)-term unit fraction decomposition of \(1\)}. Such sums are also known in the literature as~\emph{Egyptian fraction expansions} or simply~\emph{Egyptian fractions}. In some works,
repetitions among the denominators are allowed. Throughout this paper, however, we only consider decompositions with pairwise distinct denominators. We further define
\[
F(k):=|S_k|
\]
and 
\[
D_k:=\{m:\ m=n_i \text{ for some }(n_1,\dots,n_k)\in S_k\text{ and some }i \text{ with }1 \le i \le k\}.
\]
Thus, $D_k$ is the set of all denominators that occur in at least one $k$-term unit fraction decomposition of $1$.

In their 1980 monograph~\cite[p.~35]{ErGr80}, Erd\H{o}s and Graham defined $v(k)$ to be the smallest integer $>1$ not contained in $D_k$, and asked how fast $v(k)$ grows as a function of $k$. They remarked that the lower bound $v(k)\gg k!$ should
follow from earlier work of Bleicher and Erd\H{o}s~\cite{BlEr1,BlEr2,BlEr3}, and further speculated that $v(k)$ might even grow doubly exponentially. Graham reiterated the same question in \cite[p.~297]{Gra13}, and it is now listed as Problem~\#293 on Bloom's Erd\H{o}s Problems website~\cite{EP}.

For an upper bound, note that we certainly have 
\begin{equation} \label{eq:vkupper}
    v(k) \le |D_k| + 2 \le kF(k) + 2.
\end{equation} Let $c_0=1.264085\ldots$ denote the Vardi constant.\footnote{See OEIS sequence \seqnum{A076393} for its decimal expansion~\cite{OEIS}.} By applying the upper bound on $F(k)$ due to Elsholtz and Planitzer~\cite[Corollary~3]{ElPl21}, we then obtain
\[
v(k)\le c_0^{\left(\frac15+o(1)\right)2^k},
\]
as the prefactor $k$ and the additive constant $+2$ on the right-hand side of \eqref{eq:vkupper} can both be absorbed into the $o(1)$-term.

On the other hand, to the best of our knowledge, no lower bound on $v(k)$ exists in the literature. In fact, even extracting the lower bound $v(k)\gg k!$ from the results in \cite{BlEr1}, \cite{BlEr2} and \cite{BlEr3} (which Erd\H{o}s and Graham in \cite{ErGr80} and \cite{Gra13} claim is `easy to see'), does not seem straightforward to us. Regardless, the purpose of this note is to prove the following stronger bound.

\begin{thm}\label{thm:main_vkeck2}
There exists an absolute constant $c>0$ such that
\[
v(k)\ge e^{ck^2}
\]
holds for all positive integers $k$.
\end{thm}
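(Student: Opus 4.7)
My plan is to prove the equivalent statement that every integer $m$ with $2\le m\le e^{ck^2}$ lies in $D_k$; that is, for each such $m$, I will exhibit $k-1$ distinct positive integers $n_1<\cdots<n_{k-1}$, all different from $m$, satisfying
\[
\sum_{i=1}^{k-1}\frac{1}{n_i}=\frac{m-1}{m}.
\]

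The main tool controlling the length of a decomposition is the splitting identity $\frac{1}{n}=\frac{1}{n+1}+\frac{1}{n(n+1)}$. Applied to the largest denominator $n_t$ of a $t$-term decomposition of $(m-1)/m$, it produces a $(t+1)$-term decomposition whose new denominators $n_t+1$ and $n_t(n_t+1)$ both exceed $n_t$, so distinctness is preserved automatically. Once the largest denominator exceeds $m$, no further split can reintroduce $m$. Consequently it suffices, for each admissible $m$, to produce \emph{some} decomposition of $(m-1)/m$ with at most $k-1$ terms, no denominator equal to $m$, and whose maximum exceeds $m$. The only problematic case is when the maximum happens to equal $m-1$; I would handle it by invoking an alternative split $\frac{1}{m-1}=\frac{1}{p}+\frac{1}{q}$ with $p,q\ne m$, available via the parametrization $p=m-1+a$, $q=(m-1)(m-1+a)/a$ over divisors $a\ne 1$ of $(m-1)^2$.

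The core of the argument is constructing this short base decomposition of $(m-1)/m$, which must have at most $k-1$ distinct terms. Since $m$ may be as large as $e^{ck^2}$, the budget is tight — roughly $\sqrt{\log m}$ terms in the worst case — so a plain greedy Fibonacci–Sylvester construction (which can require up to $m-1$ terms) does not suffice. The strategy I would pursue is iterative: maintain a residual $r$ (initially $(m-1)/m$), and at each step subtract a well-chosen unit fraction $1/n$ so that the numerator of the new residual is reduced multiplicatively by a factor of the required size. Choices for $n$ are drawn from identities tailored to the arithmetic of the current denominator, such as $\frac{1}{n(n+d)}=\frac{1}{d}\bigl(\frac{1}{n}-\frac{1}{n+d}\bigr)$ together with factorizations of the residual's denominator, which turn a single unit fraction into a controlled telescoping pair.

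The main obstacle is guaranteeing such a short decomposition exists \emph{uniformly in} $m$, in particular for $m$ with poor arithmetic structure (e.g., primes and near-primes). For such $m$ the factorization-based identities lose their bite, and the residual risks shrinking too slowly. Overcoming this likely requires approximating $m$ (or the current residual's denominator) by a nearby smooth integer and then correcting the small discrepancy with a short tail of further unit fractions, while simultaneously maintaining distinctness of all denominators and avoiding the forbidden value $m$. Carefully calibrating this approximation-and-correction procedure is the technical heart of the argument, and is what should deliver the $e^{ck^2}$ bound rather than a weaker exponential in $k$.
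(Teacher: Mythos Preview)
Your overall architecture matches the paper's: show that every $m\le e^{ck^2}$ admits a decomposition of $(m-1)/m$ into $O(\sqrt{\log m})$ distinct unit fractions, add $\tfrac1m$, and pad. But the proposal has a genuine gap at its self-identified ``technical heart.'' The entire strength of the bound $v(k)\ge e^{ck^2}$ comes from the estimate $N(b)\ll\sqrt{\log b}$, and you do not prove it --- you only describe, in outline, an ``approximation-and-correction procedure'' involving nearby smooth integers. This is precisely Vose's theorem, which the paper invokes as a black box (Lemma~\ref{vose}); its proof is nontrivial and requires a carefully constructed sequence $N_K=4^{\alpha K^2}(p_1\cdots p_K)^2$ with controlled divisor structure. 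Your sketch does not supply the mechanism that forces the numerator to shrink fast enough at each step, nor does it explain why the denominators produced remain distinct. Without either a proof or a citation of Vose's result, the argument does not establish anything beyond $v(k)\ge e^{ck}$ (which would follow from the elementary bound $N(b)\ll\log b$).

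There is a second, smaller gap. You require the base decomposition of $(m-1)/m$ to already avoid the denominator $m$, but you give no reason why the short decomposition you construct should have this property; you only discuss avoiding $m$ during the \emph{padding} phase. In the paper this is a real issue: Vose's decomposition of $(m-1)/m$ may well contain $m$ among its denominators $u_i$ or $v_j$, and a substantial part of the proof is devoted to replacing $\tfrac1m$ by $\tfrac{1}{m+t}+\sum_{i,d}\tfrac{1}{d(m+i)(m+i+1)}$ for a carefully chosen $13$-element set $D$ of multiples of $3$, then exploiting the fact that $3\nmid N_K$ to certify that all new denominators are distinct from the $u_i$, the $v_j$, and one another. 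Your alternative-split idea for $\tfrac{1}{m-1}$ addresses a different (and easier) collision and does not cover this case.
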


\section{Main lower bound}\label{sec:main_lower_bound}
As observed by Konyagin~\cite{Ko14}, we have $F(k) \le F(k+1)$ for all $k \ge 2$, due to the identity
\begin{equation} \label{eq:basic}
\frac{1}{n} = \frac{1}{n+1} + \frac{1}{n(n+1)}.
\end{equation}
Indeed, \eqref{eq:basic} provides an injection from $S_k$ to $S_{k+1}$, sending $$(n_1, \ldots, n_k) \in S_k$$ to $$(n_1, \ldots, n_{k-1}, n_k+1, n_k(n_k+1)) \in S_{k+1}.$$ By a slightly more involved argument we can even show the inclusion $D_k \subseteq D_{k+1}$.

\begin{lem}\label{lem:extension}
For all $k \ge 2$ we have $D_k \subseteq D_{k+1}$.
\end{lem}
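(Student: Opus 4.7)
The plan is to take any decomposition $(n_1,\dots,n_k)\in S_k$ containing $m$, say $m=n_i$, and produce a $(k+1)$-term decomposition that still contains $m$ by splitting some $n_j$ with $j\ne i$ via \eqref{eq:basic}. If $i<k$, I would split $n_k$: the new denominators $n_k+1$ and $n_k(n_k+1)$ strictly exceed $n_{k-1}$, so they collide with nothing in $\{n_1,\dots,n_{k-1}\}$, and $m=n_i$ is preserved. This handles the easy case.

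The substantive case is $m=n_k$, where I would instead try to split $n_{k-1}$. The new denominators $n_{k-1}+1$ and $n_{k-1}(n_{k-1}+1)$ both exceed $n_{k-1}$ and hence all of $n_1,\dots,n_{k-2}$, so the only possible collision is with $n_k=m$, happening precisely in one of the scenarios
\[
(T_1):\ m=n_{k-1}+1,\qquad (T_2):\ m=n_{k-1}(n_{k-1}+1).
\]
These cannot hold simultaneously (which would force $n_{k-1}=1$). If neither holds, the standard split produces the desired $(k+1)$-term decomposition.

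To resolve $(T_1)$ or $(T_2)$, I would appeal to the generalized splitting identity $\frac{1}{n_{k-1}}=\frac{1}{n_{k-1}+s}+\frac{1}{n_{k-1}+t}$, valid whenever $st=n_{k-1}^2$ with $s<t$. When $n_{k-1}$ is composite, I can take $s$ to be any prime divisor of $n_{k-1}$, so that $1<s<n_{k-1}$; an elementary size comparison then shows that the resulting new denominators $n_{k-1}+s$ and $n_{k-1}+t$ differ from $m$ in either obstruction scenario and automatically differ from every $n_j$ with $j<k-1$.

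The main obstacle will be the case where $n_{k-1}=p$ is prime, since then $p^2$ admits only the trivial factorization and no alternative split is available. I plan to rule this case out entirely by showing that neither $(T_1)$ nor $(T_2)$ is compatible with the existence of the decomposition. Indeed, a direct computation yields
\[
\sum_{i<k-1}\frac{1}{n_i}=\frac{p^2-p-1}{p(p+1)}\ \text{ in }(T_1),\qquad \sum_{i<k-1}\frac{1}{n_i}=\frac{p^2-2}{p(p+1)}\ \text{ in }(T_2),
\]
and for $p\ge 3$ both numerators are coprime to $p(p+1)$ (a short check modulo $p$ and modulo $p+1$), so the right-hand sides in lowest terms have denominator divisible by $p$. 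But each $n_i$ with $i<k-1$ satisfies $n_i<p$ and is thus coprime to the prime $p$, so the left-hand sides have denominator coprime to $p$---a contradiction. The small remaining case $p=2$ is handled by direct inspection, as it would force $n_{k-2}=1$ and hence a total sum exceeding $1$. Hence the standard or alternative split always succeeds, proving the lemma.
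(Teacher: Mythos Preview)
Your argument is correct, and in the prime case it is actually cleaner than the paper's. Both proofs coincide through the easy case $i<k$, the standard split of $n_{k-1}$, the identification of the two obstruction scenarios $(T_1)$ and $(T_2)$, and the alternative split $\frac{1}{n_{k-1}}=\frac{1}{n_{k-1}+s}+\frac{1}{n_{k-1}+t}$ when $n_{k-1}$ is composite (the paper writes this as $\frac{1}{ab}=\frac{1}{ab+a}+\frac{1}{b(ab+a)}$, which is the same identity with $s=a$, $t=bn_{k-1}$).

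The divergence is in the prime case $n_{k-1}=p$. The paper only rules out $(T_1)$ via the $p$-adic argument, accepts that $(T_2)$ may occur, and then recurses by splitting $n_{k-2}$ instead; this forces a further case distinction on whether $n_{k-1}=n_{k-2}+1$, a separate check that $n_{k-2}$ is composite in that sub-case, and an explicit verification of the base cases $k=2,3$ to secure the assumption $k\ge 4$. You observe that the \emph{same} $p$-adic obstruction also kills $(T_2)$: since $p^2-2\equiv -2\pmod p$ is a unit for odd $p$, the right-hand side $\frac{p^2-2}{p(p+1)}$ still has denominator divisible by $p$, while $\sum_{i<k-1}1/n_i$ cannot. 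This eliminates the entire recursion on $n_{k-2}$, the $k\ge 4$ hypothesis, and the hand-checked base cases; the residual case $p=2$ is indeed immediate since it forces $n_{k-2}=1$. So your route is shorter and more uniform, at the cost of one extra congruence check that the paper did not notice was available.
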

\begin{proof}
Let $m$ be an element of $D_k$ and let $(n_1, \ldots, n_k) \in S_k$ contain $m$. We then aim to construct a $(k+1)$-tuple in $S_{k+1}$ that also contains $m$. As one can verify $$D_2 = \emptyset \subseteq D_3 = \{2, 3, 6\} \subseteq \{2, 3, 7, 42\} \cup \{2, 4, 6, 12\} \subseteq D_4,$$ we may assume $k\ge 4$. 

In analogy with the proof of $F(k) \le F(k+1)$, if $m \neq n_k$, then we first of all remark $$m \in (n_1,\dots,n_{k-1},n_k+1,n_k(n_k+1)) \in S_{k+1}$$ by \eqref{eq:basic}. Secondly, if $m = n_k \notin \{n_{k-1}+1,\;n_{k-1}(n_{k-1}+1)\}$, then we have\footnote{Here, as in the rest of this proof, we slightly abuse notation by writing that a $(k+1)$-tuple is contained in $S_{k+1}$, where it is actually the sorted tuple that is contained in $S_{k+1}$.} $$m \in (n_1,\dots,n_{k-2}, n_k, n_{k-1}+1,n_{k-1}(n_{k-1}+1)) \in S_{k+1},$$ once again by \eqref{eq:basic}. We may therefore further assume $m = n_k \in \{n_{k-1}+1,\;n_{k-1}(n_{k-1}+1)\}$ from now on.

If $n_{k-1}$ is composite, write $n_{k-1} = ab$ with $a$ and $b$ integers larger than $1$. Generalizing~\eqref{eq:basic}, we have the identity $$\frac{1}{n_{k-1}}=\frac{1}{n_{k-1}+a}+\frac{1}{b(n_{k-1}+a)},$$ while $$n_{k-1}+1 < n_{k-1}+a < b(n_{k-1}+a) < n_{k-1}(n_{k-1}+1).$$ We therefore deduce $$m \in (n_1,\dots,n_{k-2}, n_k, n_{k-1}+a, b(n_{k-1}+a)) \in S_{k+1}.$$ 

On the other hand, if $n_{k-1} = p$ is prime, we claim that $m = n_k \in \{n_{k-1}+1,\;n_{k-1}(n_{k-1}+1)\}$ implies $n_k = n_{k-1}(n_{k-1}+1)$. To see this, assume by contradiction that $m = n_k$ is equal to $n_{k-1}+1$ instead. We then write
\[
\sum_{i=1}^k \frac{1}{n_i}
=\frac{1}{p}+\sum_{\substack{1\le i\le k\\ i\neq k-1}}\frac{1}{n_i}
=\frac{1}{p}+\frac{A}{B}=\frac{Ap+B}{Bp},
\]
where $\frac{A}{B}$ is the reduced form of the remaining sum.
Now, by the assumption $n_k = n_{k-1}+1$, we see that none of the denominators $n_i$ with $i\neq k-1$ are divisible by $p$,
hence $\gcd(B,p)=1$. Thus, for the numerator we have $Ap+B\equiv B\not\equiv 0\pmod p$, implying that the fraction $(Ap+B)/(Bp)$ cannot simplify
to a fraction with denominator not divisible by $p$. In particular, this fraction is not equal to
the integer $1$, contradicting the assumption $(n_1,\ldots,n_k)\in S_k$.

With the assumption that $n_{k-1}$ is prime and the equality $n_k = n_{k-1}(n_{k-1}+1)$ in mind, there are now two final cases to consider.

If $n_{k-1} \neq n_{k-2} + 1$, then we claim that $$m \in (n_1,\dots,n_{k-3},n_{k-1},n_k,n_{k-2}+1,n_{k-2}(n_{k-2}+1)) \in S_{k+1}.$$ The sum of reciprocals of this latter tuple is equal to $1$ by~\eqref{eq:basic}, so it suffices to prove that all integers in the tuple are distinct. In other words, we need to show that $$\{n_{k-1}, n_k\} \cap \{n_{k-2}+1,n_{k-2}(n_{k-2}+1) \} = \emptyset.$$ One sees this by the assumption $n_{k-1} \neq n_{k-2} + 1$, the assumption that $n_{k-1}$ is prime (hence not equal to $n_{k-2}(n_{k-2}+1)$), and the fact that $n_{k} = n_{k-1}(n_{k-1}+1) > n_{k-2}(n_{k-2}+1)$.

If $n_{k-1} = n_{k-2} + 1$, then $n_{k-2}$ is even and, by the assumption $k \ge 4$, larger than $n_1 \ge 2$. Hence, $n_{k-2}$ is composite and we can write $n_{k-2} = ab$ with $a$ and $b$ integers larger than $1$. We then see $$m \in (n_1,\dots,n_{k-3},n_{k-1}, n_k, n_{k-2}+a, b(n_{k-2}+a)) \in S_{k+1},$$ where $$\{n_{k-1}, n_k\} \cap \{n_{k-2}+a,b(n_{k-2}+a) \} = \emptyset$$ follows similarly as before, in this case using the assumption  $n_{k-1} = n_{k-2} + 1 \neq n_{k-2} + a$.
\end{proof}

We now recall the following result by Vose~\cite{VoseBLMS}:

\begin{lem}[\cite{VoseBLMS}]\label{vose}
    There exists a positive integer $\alpha$, a sequence\footnote{Although it is not explicitly mentioned in~\cite{VoseBLMS} that we may assume all primes are larger than or equal to $5$, this does follow from the proof of~\cite[Lemma~3]{VoseJNT}. We also use ever so slightly different definitions of $N_K$, $u_i$ and $v_j$, but these differences are mainly cosmetic.} of primes $5 \le p_1 < p_2 < \cdots$ and a sequence of positive integers $N_1 < N_2 < \cdots$ defined by $$N_K :=  4^{\alpha K^2} (p_1p_2\cdots p_K)^2$$ for which the following holds: for all fractions $\frac{a}{b} \in (0, 1)$ there is an integer $K$ and divisors $$d_1, \ldots, d_r, d'_1, \ldots, d'_s$$ of $N_K$ such that with $u_i := \frac{N_K}{d_i}$ and $v_j := \frac{bN_K}{d'_j}$ we have both $$1 < u_1 < \cdots < u_r < v_1 < \cdots < v_s$$ and $$r+s \le C \sqrt{\log b}$$ for some absolute constant $C$, while $$\frac{a}{b} = \frac{1}{u_1} + \ldots + \frac{1}{u_r} + \frac{1}{v_1} + \ldots + \frac{1}{v_s}.$$
\end{lem}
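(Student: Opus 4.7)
The plan is to follow Vose's original argument in~\cite{VoseBLMS}, exploiting the abundance of divisors of $N_K$. Clearing denominators, the desired identity $\frac{a}{b}=\sum\frac{1}{u_i}+\sum\frac{1}{v_j}$ with $u_i=N_K/d_i$ and $v_j=bN_K/d'_j$ is equivalent to the integer equation
\[
aN_K \;=\; b\sum_{i=1}^{r} d_i \;+\; \sum_{j=1}^{s} d'_j,
\]
in which the $d_i,d'_j$ are to be chosen as distinct divisors of $N_K$ lying in disjoint size ranges that enforce the ordering $u_1<\cdots<u_r<v_1<\cdots<v_s$. I would pick $K$ of order $\sqrt{\log b}$ at the outset so that the two stages below each use only $O(\sqrt{\log b})$ terms.

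Stage~1 selects the $d'_j$ from divisors of the squarefree part $p_1\cdots p_K$ of $N_K$ so that $\sum d'_j\equiv aN_K\pmod b$. The subclaim here is that $O(\sqrt{\log b})$ such divisors suffice to realize any residue class modulo $b$: the number of $s$-element subsets of $\{p_1,\ldots,p_K\}$ is $\binom{K}{s}$, and once this exceeds $b$ --- which happens at $s\asymp K\asymp\sqrt{\log b}$ --- two subset sums coincide modulo $b$, and their symmetric difference (possibly combined with taking complements inside a larger divisor to flip the overall sign) realizes the target residue. Stage~2 then sets $M:=(aN_K-\sum d'_j)/b$, a nonnegative integer, and writes $M=\sum d_i$ as a sum of at most $O(\sqrt{\log b})$ distinct divisors of $N_K$. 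The prefactor $4^{\alpha K^2}$ in $N_K$ does the heavy lifting here: its abundance of small divisors of the form $2^t p_{i_1}\cdots p_{i_\ell}$ lets one express $M$ in a short ``mixed-base'' way via a greedy decomposition that terminates in $O(\sqrt{\log b})$ steps. The ordering $u_i<v_j$ comes for free by drawing the $d_i$ from a pool of small divisors (so $u_i=N_K/d_i$ is large) and the $d'_j$ from a disjoint pool of larger divisors.

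The main obstacle is the joint calibration of $K$, $r$, and $s$ so that both stages succeed within the $O(\sqrt{\log b})$ budget: stage~1 demands enough primes to cover residues, stage~2 demands enough small divisors to reach $M$ cheaply, and the ordering constraint demands disjoint divisor pools with separated value ranges. The quadratic exponent in $4^{\alpha K^2}$ is calibrated precisely so that all three constraints can be met simultaneously at $K\asymp\sqrt{\log b}$; heuristically, $(K/s)^s$ and $4^{\alpha K^2}$ must both sit slightly above $b$, which forces both $K$ and $s$ to the square-root scale. A secondary technical point, flagged in the paper's footnote, is adapting the expositions of~\cite{VoseBLMS} and~\cite{VoseJNT} so that the primes can be taken to be $\ge 5$ and so that the exact form of $N_K$, $u_i$, and $v_j$ in the statement is produced; these adjustments are essentially cosmetic but must be threaded consistently through the argument.
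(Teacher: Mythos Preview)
The paper does not prove this lemma at all: it is simply quoted as a result of Vose~\cite{VoseBLMS}, with the footnote noting only cosmetic differences from Vose's original formulation and pointing to~\cite{VoseJNT} for the fact that the primes may be taken $\ge 5$. There is no in-paper argument to compare against; the authors treat Lemma~\ref{vose} as a black box and invoke it in the proof of Theorem~\ref{thm:main_vkeck2}.

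As a sketch of Vose's own argument, your outline also drifts from the original and has a concrete gap. The structural input Vose actually uses---and the reason~\cite{VoseJNT} is cited in the footnote---is that the divisors of $N_K$ in a suitable range have \emph{consecutive ratios} close to $1$; the factor $4^{\alpha K^2}$ is calibrated to guarantee this density of divisors, not to supply a ``mixed-base'' representation for Stage~2. Vose then runs a greedy subtraction exploiting this small-ratio property, rather than your ``residue first, quotient second'' split. More seriously, your Stage~1 pigeonhole step does not do what you claim: knowing that $\binom{K}{s}>b$ forces two $s$-element subsets of $\{p_1,\dots,p_K\}$ to collide modulo $b$ yields only a short sum of divisors that is $\equiv 0\pmod b$, not one that hits the \emph{prescribed} residue $aN_K\pmod b$; ``taking complements to flip the sign'' does not turn a collision into coverage of all residue classes by short \emph{positive} sums. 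Closing this would require a genuine additive-combinatorics statement (that the relevant subset sums already cover all of $\mathbb{Z}/b\mathbb{Z}$), which is neither the argument in~\cite{VoseBLMS} nor obviously true at the claimed scale $s\asymp\sqrt{\log b}$.
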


The proof of Theorem~\ref{thm:main_vkeck2} will now be based on applying Lemma \ref{vose} to the fraction $\frac{a}{b} = \frac{m-1}{m}$.

\begin{proof}[Proof of Theorem~\ref{thm:main_vkeck2}]
By iterating Lemma \ref{lem:extension} it is sufficient to show that there exists an absolute constant $c > 0$ such that for all integers $k \geq 1$ and all integers $m$ with $1 < m < e^{ck^2}$ we have $m \in D_{k'}$ for some $k' \le k$. That is, we need to prove that there exists a unit fraction decomposition of $1$ with at most $k$ terms that contains $m$ as one of the denominators.

With $C$ as in Lemma~\ref{vose} we choose
\[
c := \min\left(\frac{\log 2}{432^2}, \frac{1}{(433C)^2}\right).
\]
Now let $m$ be any integer with $1 < m < e^{ck^2}$, and note that we may assume $k\ge 433$, as for $k\le 432$ there are no integers $m$ with $1 < m < e^{ck^2}$ by the definition of $c$.

We then first consider $m \le 432$, in which case it is by the assumption $k \ge 433$ sufficient to show that the denominator $m$ occurs in a unit fraction decomposition of $1$ with at most $m+1$ terms. If $m \le 432$ is not of the form $q(q+1)$ with $q \in \mathbb{N}$, then we have the decomposition
\[
1=\frac1m+\sum_{i=1}^{m-1}\frac{1}{i(i+1)}.
\]
The denominators in this decomposition are distinct, since the products $i(i+1)$ are strictly increasing and none of them are equal to $m$.

On the other hand, if $m=q(q+1)$ for some positive integer $q$, then we use
\[
1=\frac1m+\frac{1}{m+1}+\frac{1}{m(m+1)}
  +\sum_{\substack{1\le i\le m-1\\ i\ne q}}\frac{1}{i(i+1)}.
\]
We claim that this is again a decomposition into distinct unit fractions. Indeed, the products $i(i+1)$ are strictly increasing, the product $q(q+1)=m$ has been omitted, and neither $m+1$ nor $m(m+1)$ is equal to one of the remaining products, since
\[
q(q+1)<m+1<(q+1)(q+2),
\]
while $i(i+1)<m(m+1)$ for all $1\le i\le m-1$. Hence, for every $2\le m\le 432$, there is such a decomposition with at most $m+1 \le 433$ terms. For the remainder of the proof, we may therefore assume both $k\ge 433$ and $m \ge 433$. We then apply Lemma~\ref{vose} to the fraction $\frac{a}{b} = \frac{m-1}{m}$, and there are two different cases to consider.

If $m$ does not occur among the $u_i$ and $v_j$, then $$\frac{1}{u_1} + \ldots + \frac{1}{u_r} + \frac{1}{v_1} + \ldots + \frac{1}{v_s} + \frac{1}{m}$$ is a unit fraction decomposition of $1$ containing $m$ as a denominator. Moreover, the number of terms is
\begin{align*}
r+s+1 &\le C \sqrt{\log m} +1 \\
&< C\sqrt{c}k + 1 \\
&\le \frac{k}{433} + 1 \\
&< k.
\end{align*}

On the other hand, if $m$ does occur as one of the denominators $u_i$ or $v_j$, then let $t \in \{1, 2, 3\}$ be such that $m+t$ is divisible by $3$ and define $$D := \{3, 6, 9, 12, 15, 18, 24, 27, 30, 45, 54, 60, 72\}.$$ We note that all $d \in D$ are divisible by $3$ and one can verify that $\sum_{d \in D} \frac{1}{d} = 1$. We now consider
\begin{equation} \label{egypt}
\frac{1}{u_1} + \ldots + \frac{1}{u_r} + \frac{1}{v_1} + \ldots + \frac{1}{v_s} + \frac{1}{m+t} + \sum_{i=0}^{t-1} \sum_{d \in D} \frac{1}{d(m+i)(m+i+1)}.
\end{equation}
Since $\sum_{d \in D} \frac{1}{d} = 1$, we first note that
\begin{align*}
\frac{1}{m+t} + \sum_{i=0}^{t-1} \sum_{d \in D} \frac{1}{d(m+i)(m+i+1)}
&= \frac{1}{m+t} + \sum_{i=0}^{t-1} \frac{1}{(m+i)(m+i+1)} \\
&= \frac{1}{m+t} + \sum_{i=0}^{t-1} \left(\frac{1}{m+i} - \frac{1}{m+i+1} \right) \\
&= \frac{1}{m},
\end{align*}
as the sum telescopes. We therefore deduce that the total sum in \eqref{egypt} is equal to $1$. As before, the number of terms in \eqref{egypt} satisfies
\begin{align*}
r+s+1+t|D| &\le r+s+40 \\
&<  C\sqrt{c}k + 40 \\
&\le \frac{k}{433} + 40 \\
&\le k,
\end{align*}
by our choice of $c$ and the fact that $k\ge 433$. Therefore, to complete the proof it suffices to show that all denominators in \eqref{egypt} are distinct.

To see this, observe that every denominator in \eqref{egypt} other than the $u_i$ and $v_j$
is a multiple of $3$. On the other hand, recall that \(N_K=4^{\alpha K^2}(p_1p_2\cdots p_K)^2\) with $p_i \ge 5$ for all $i$, so that $3\nmid N_K$. Hence, none of these new denominators can divide $N_K$, and in particular none of them can coincide with any $u_i$.
Therefore, it remains to check that the denominators $m+t$ and $d(m+i)(m+i+1)$ are all
distinct from the $v_j$ (or equivalently, that none of them are equal to $m$ times a divisor of $N_K$),
and also distinct from one another.

Now, $m+t \in \{m+1,m+2,m+3 \}$ is not a multiple of $m$ if $m \ge 4$, while $m+t$ is smaller than all the other terms $d(m+i)(m+i+1)$, so the denominator $m+t$ in \eqref{egypt} is certainly unique.

If $i=0$, then $d(m+i)(m+i+1)=dm(m+1)$ cannot be equal to $m$ times a divisor of $N_K$, as $d(m+1)$ is divisible by $3$, whereas $3\nmid N_K$.

To see why $d(m+i)(m+i+1)$ is, for $1 \leq i \leq t-1$, not a multiple of $m$ and therefore not equal to any $v_j$, note that
\begin{align*}
\gcd\big(m, d(m+i)(m+i+1)\big) &\le \gcd(m, d)\gcd(m, m+i)\gcd(m, m+i+1) \\
&= \gcd(m, d)\gcd(m, i)\gcd(m, i+1)\\
&\le d(t-1)t\\
&\le 432 \\
&< m.
\end{align*}
We conclude that none of the $d(m+i)(m+i+1)$ are equal to a $v_j$.

Finally, we need to check that
\[
d(m+i)(m+i+1)\neq d'(m+j)(m+j+1)
\]
for all $d,d'\in D$ and all integers $0\le i,j \leq t-1$, unless $d=d'$ and $i=j$. Suppose, for a contradiction, that
\begin{equation}\label{eq:collision}
d(m+i)(m+i+1)=d'(m+j)(m+j+1)
\end{equation}
does hold for some $d,d'\in D$ and $0\le i,j \leq t-1$. By symmetry we may assume $d\ge d'$. In fact, if $d=d'$, then equation \eqref{eq:collision} simplifies to
\[
(m+i)(m+i+1)=(m+j)(m+j+1).
\]
As the function $x\mapsto (m+x)(m+x+1)$ is strictly increasing for $x\ge 0$, this would imply $i=j$. Hence, we will further assume the strict inequality $d > d'$.

Inspecting the set $D$, one can verify that for any distinct $d>d'$ in $D$ we actually have
\begin{equation} \label{eq:tenovernine}
\frac{d}{d'}\ge \frac{10}{9}.
\end{equation}
Since $t\in\{1,2,3\}$, we have $i,j\in\{0,1,2\}$, hence
\begin{equation} \label{eq:ismall}
d(m+i)(m+i+1)\ge dm(m+1)
\end{equation}
and
\begin{equation} \label{eq:jlarge}
d'(m+j)(m+j+1)\le d'(m+2)(m+3).
\end{equation}
By combining~\eqref{eq:collision},~\eqref{eq:tenovernine},~\eqref{eq:ismall}, and~\eqref{eq:jlarge}, we then obtain
\[
\frac{10}{9}m(m+1) \le (m+2)(m+3),
\]
contradicting $m > 432$. This completes the proof.
\end{proof}

\section{Concluding Remarks}\label{sec:concluding}
For integers $1\le a<b$, let $N(a,b)$ denote the minimal $t$ such that there exist distinct integers
$1<n_1<\cdots<n_t$ with
\[
\frac{a}{b}=\frac1{n_1}+\cdots+\frac1{n_t},
\]
and set $N(b):=\max_{1\le a<b} N(a,b)$.
The problem of estimating $N(b)$ goes back to Erd\H{o}s~\cite{Er50}, where he showed $$N(b) \ll \frac{\log b}{\log \log b}.$$ Erd\H{o}s and Graham~\cite[p.~37]{ErGr80} asked to improve upon this, and the determination of $N(b)$ is now recorded
as Erd\H{o}s Problem~\#304 on Bloom's Erd\H{o}s Problems website~\cite{EP304}. The current best known upper bound is by Vose~\cite{VoseBLMS} and gives
\begin{equation}\label{eq:Nb_vose}
N(b)\ll \sqrt{\log b}.
\end{equation}
The stronger conjecture that we actually have
\begin{equation}\label{eq:Nb_conj}
N(b)\ll \log\log b
\end{equation}
was first suggested in \cite{Er50}.

It is Vose's proof of \eqref{eq:Nb_vose} that we use in order to prove Theorem~\ref{thm:main_vkeck2}. If, however, \eqref{eq:Nb_conj} holds, it seems likely that our lower bound on $v(k)$ can be improved to $e^{e^{ck}}$ instead, which would match the doubly exponential growth rate suggested by Erd\H{o}s and Graham (up to the constant $c$ in the exponent). 

Conversely, any lower bound for $v(k)$ immediately yields an upper bound for $N(b-1,b)$. 
Indeed, $b<v(k)$ implies $b\in D_k$, so there is a $k$-term decomposition of $1$ involving $\frac{1}{b}$.
Removing this term gives a $(k-1)$-term decomposition of $\frac{b-1}{b}$, hence $N(b-1,b)\le k-1$.
This explains why Erd\H{o}s Problems \#293~\cite{EP} and \#304~\cite{EP304} are closely connected.

\section*{Acknowledgements}
The authors are grateful to Thomas Bloom for founding and maintaining the Erd\H{o}s Problems website. We further thank the anonymous referee for pointing out a simplification in our original treatment of the case of small $m$.


\begin{thebibliography}{99}

\bibitem{BlEr1}
M.~N.~Bleicher and P.~Erd\H{o}s, The number of distinct subsums of $\sum_{i=1}^N \frac{1}{i}$, \textit{Math. Comp.} \textbf{29} (1975), 29--42.

\bibitem{BlEr2}
M.~N.~Bleicher and P.~Erd\H{o}s, Denominators of unit fractions, \textit{J. Number Theory} \textbf{8} (1976), 157--168.

\bibitem{BlEr3}
M.~N.~Bleicher and P.~Erd\H{o}s, Denominators of unit fractions II, \textit{Illinois J. Math.} \textbf{20} (1976), 598--613.

\bibitem{EP}
T.~F.~Bloom,
Erd\H{o}s Problem \#293,
available at: \url{https://www.erdosproblems.com/293}, accessed December 8, 2025.

\bibitem{EP304}
T.~F.~Bloom,
Erd\H{o}s Problem \#304,
available at: \url{https://www.erdosproblems.com/304}, accessed December 8, 2025.

\bibitem{ElPl21}
C.~Elsholtz and S.~Planitzer, Sums of four and more unit fractions and approximate parametrizations, \textit{Bull. Lond. Math. Soc.} \textbf{53} (2021), no.~3, 695--709.

\bibitem{Er50}
P.~Erd\H{o}s, Az $\frac{1}{x_1} + \frac{1}{x_2} +\cdots + \frac{1}{x_n} = \frac{a}{b}$ egyenlet eg\'esz sz\'am\'u megold\'asair\'ol, \textit{Mat. Lapok} \textbf{1} (1950), 192--210.

\bibitem{ErGr80}
P.~Erd\H{o}s and R.~L.~Graham,
\textit{Old and New Problems and Results in Combinatorial Number Theory},
Monogr. L'Enseign. Math., Vol.~28, L'Enseignement Math., Geneva, 1980.

\bibitem{Gra13}
R.~L.~Graham,
Paul Erd\H{o}s and Egyptian fractions,
in \textit{Erd\H{o}s Centennial},
Bolyai Soc. Math. Stud., Vol.~25, J\'anos Bolyai Math. Soc., Budapest, 2013, 289--309.

\bibitem{Ko14}
S.~V.~Konyagin, Double exponential lower bound for the number of representations of unity by Egyptian fractions,
\textit{Math. Notes} \textbf{96} (2014), no.~2, 277--281.

\bibitem{OEIS}
N.~J.~A.~Sloane et al.,
\emph{The On-Line Encyclopedia of Integer Sequences},
available at: \url{https://oeis.org}.

\bibitem{VoseJNT}
M.~D.~Vose, Integers with consecutive divisors in small ratio, \textit{J. Number Theory} \textbf{19} (1984), no.~2, 233--238.

\bibitem{VoseBLMS}
M.~D.~Vose, Egyptian fractions, \textit{Bull. Lond. Math. Soc.} \textbf{17} (1985), no.~1, 21--24.

\end{thebibliography}
\end{document}